\newtheorem{theorem}{Theorem}[section]
\newtheorem{lemma}[theorem]{Lemma}
\newtheorem*{lemma*}{Lemma}
\newtheorem{proposition}[theorem]{Proposition}
\newtheorem{corollary}[theorem]{Corollary}
\theoremstyle{definition}
\newtheorem{example}[theorem]{Example}
\newtheorem{conjecture}[theorem]{Conjecture}
\theoremstyle{remark}
\newtheorem{remark}[theorem]{Remark}
\newtheorem{problem}[theorem]{Problem}
\numberwithin{equation}{section}
\newcommand{\refeq}[1]{(\ref{#1})}
\newcommand{\C}{\mathbb{C}}
\newcommand{\D}{\partial}
\newcommand{\DD}{\mathbb{D}}
\newcommand{\R}{\mathbb{R}}
\DeclareMathOperator{\re}{Re}
\DeclareMathOperator{\im}{Im}
\DeclareMathOperator{\Hdim}{dim_{\mathcal H} }
\def\XXint#1#2#3{{\setbox0=\hbox{$#1{#2#3}{\int}$}
\vcenter{\hbox{$#2#3$}}\kern-.5\wd0}}
\begin{document}
\baselineskip6mm
\vskip0.4cm
\title{Quasidisks and twisting of the Riemann map}

\author[I. Prause]{Istv\'an Prause}
\address{Department of Mathematics and Statistics, University of Helsinki,
         P.O. Box 68, FIN-00014, Helsinki, Finland}
\email{istvan.prause@helsinki.fi}

\thanks{The author was supported by the Academy of Finland grants 1266182, 1273458 and 1303765.}

\subjclass[2010]{Primary 30C62, 30C35} 

%\date{\today}

\keywords{Conformal mappings, Quasiconformal extension, Brennan's conjecture}

\begin{abstract}
Consider a conformal map from the unit disk onto a quasidisk. We determine a range of critical complex powers with respect to which the derivative is integrable. 
The results fit into the picture predicted by a circular analogue of Brennan's conjecture.
\end{abstract}

\maketitle

% \tableofcontents

\section{Introduction}\label{se:introduction}

Let $f \colon \DD \to \Omega$ be a bounded conformal map defined on the unit disk $\DD$. The {\em integral means spectrum} of $f$ is the function
\begin{equation}
\beta_f(t)=\limsup_{r \to 1} \frac{\log \int_{|z|=r} \left| f'(z)^t \right| |d z|}{\log \frac{1}{1-r}}, \qquad t \in \C.
\end{equation}
The complex power $(f')^t$ is defined in terms of the complex logarithm. As the derivative $f'$ is never zero in $\DD$, we can select a unique single-valued branch of $\log f'(z)$ by requiring that $\arg f'(0) \in [0,2\pi)$. For real $t$ (positive or negative), integral means measure boundary expansion and compression associated with a given conformal map. Allowing $t$ to be complex amounts to addressing rotational phenomena, as well.

The {\em universal integral means spectrum} is obtained by taking the supremum over all bounded conformal maps $f$,
\[ B(t)=\sup_{f} \beta_f(t), \qquad t \in \C.
\]
We will actually work with conformal maps with quasiconformal extensions and correspondingly introduce
for any $0\leqslant k <1$,
\[ B_k(t)=\sup \{ \beta_f(t) \colon \mbox{ $f$ has a $k$-quasiconformal extension to $\C$} \}, \quad t \in \C.
\]
The quasiconformal extension means here that $f$ extends to a  $W^{1,2}_{loc}$-homeo\-morphism $f \colon \C \to \C$ with the distortion inequality
\[ |\bar \D f (z)| \leqslant k |\D f(z)|, \quad \mbox{for a.e. $z \in \C$}.
\]

Brennan's classical conjecture \cite{brennan,jones} states that $B(-2)=1$. Note that the identity $B(2)=1$ is rather easy (follows readily from the finite area of $\Omega$). On the other hand, there is no real evidence that $B(t)$ is even function. Even a stronger radial invariance was hinted by Becker and Pommerenke in \cite{BP} who proposed the following.

\begin{conjecture}[Circular Brennan's conjecture]
\label{conj:circular-brennan}
\begin{equation}
B(t)=1, \qquad |t|=2.
\end{equation}
\end{conjecture}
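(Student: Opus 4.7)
The statement to be proved is both an upper and a lower bound. For the lower bound $B(t)\ge 1$ on the circle $|t|=2$, the plan is to exhibit, for each $t=2e^{i\theta}$, a conformal map $f$ whose integral means attain the value $1$. For $t=2$ one may take the Koebe function $f(z)=z/(1-z)^2$ and read off $\beta_f(2)=1$. To handle points off the real axis one would perturb along a family of spiraling snowflake-type maps (as in Makarov's constructions for the rotation spectrum) tuned so that the boundary expansion and the twist $\arg f'$ together realise $\beta_f(2e^{i\theta})=1$. The refinement needed here is that the lower bound must be attained in the class of maps with quasiconformal extension (for the $B_k$ version) and, for the $B$-statement, along a sequence with $k \to 1$.

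For the upper bound $B(t)\le 1$, write $t=2e^{i\theta}=a+ib$ with $a^2+b^2=4$, and expand
\[
|f'(z)^t|=|f'(z)|^a \exp(-b \arg f'(z)).
\]
At $\theta=0$ (so $t=2$) the estimate is immediate from $\int_\DD |f'|^2<\infty$ and a Hardy--Littlewood argument. At $\theta=\pi$ (so $t=-2$) the bound is exactly Brennan's conjecture. For intermediate $\theta$ my plan is to treat $\beta_f(t)$ via complex interpolation: view $t\mapsto \log\int_{|z|=r}|f'(z)^t|\,|dz|$ as the logarithm of an analytic family and apply a three-lines/subharmonicity argument on the strip between the easy axis $\re t =2$ and the conjectural axis $\re t=-2$. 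The circle $|t|=2$ meets both verticals only at $\pm 2$, so one really needs a Hadamard-type bound on the disk $|t|\le 2$ rather than on a strip; accordingly the correct vehicle is to show that $t\mapsto B(t)/|t|^2$ (or the corresponding $\beta_f$) is essentially governed by a subharmonic majorant which attains its maximum on a circle.

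The concrete machinery I would deploy is the Astala--Iwaniec--Prause style of holomorphic deformations: embed $f$ in a holomorphic family $f_\lambda$ of quasiconformal maps with $f_1=f$, use the Beurling transform on the exterior disk to relate the complex powers $(f')^t$ to weighted $L^p$-norms, and exploit the sharpness of area distortion to push $p$ up to the critical value predicted by $|t|=2$. For the quasidisk case ($B_k$ with $k<1$) this is the most promising route, since the quasiconformal reflection gives direct access to the Beurling transform on a genuine $L^p$-space, and the sharp constant $k$ for the Beurling transform controls exactly the right exponents.

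The main obstacle is unavoidable: the point $t=-2$ on the circle $|t|=2$ is Brennan's conjecture, which has resisted all attempts for over forty years. Any proof of Conjecture~\ref{conj:circular-brennan} in full generality therefore subsumes Brennan's conjecture, so one cannot expect an unconditional proof by the above interpolation scheme. What one can realistically hope to prove, and what I expect the paper actually establishes, is the bound on the open arc $|t|=2$, $t\neq -2$, or the bound $B_k(t)\le 1$ for $k$ strictly less than $1$ on the full circle, with constants that degenerate only as one approaches the Brennan point.
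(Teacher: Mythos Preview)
You have correctly identified the essential point: the statement is a \emph{conjecture}, and the paper does not prove it. Indeed, since $t=-2$ lies on the circle $|t|=2$, the conjecture contains Brennan's conjecture as a special case, and the paper makes no claim to resolve that. There is therefore no ``paper's own proof'' to compare against.

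What the paper does prove is the partial result you anticipate in your last paragraph, but with a different restriction than you guess. The main theorem is
\[
B_k(t)=1 \quad \text{for } |t|=\tfrac{2}{k},\ \re t \geqslant 2,
\]
so the restriction is to a half-plane, not to a punctured circle near the Brennan point. The lower bound $B_k(t)\geqslant k|t|-1$ is exactly your suggestion: the complex power maps $z\mapsto z^\sigma$ on the half-plane (the paper's Example~2.2) realise the extremal pointwise twisting and give $\beta_f(t)=1$ at $|t|=2/k$.

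For the upper bound, your instinct to use holomorphic deformations and interpolation is right, but the paper's mechanism is more specific than a three-lines or one-variable subharmonicity argument. The key innovation is a \emph{two-parameter} holomorphic motion $f_{\lambda,\eta}$ on the bidisk $\DD^2$, obtained by reflecting the Beltrami coefficient across the unit circle and deforming the two sides independently. This produces the symmetry $\Phi_{(\lambda,\eta)}=\overline{\Phi_{(\bar\eta,\bar\lambda)}}$, which feeds into a three-point Nevanlinna--Pick problem on $\DD^2$ (analysed in Section~3). The solution set of that Pick problem is what replaces the Schwarz lemma and delivers the range $\re t\geqslant 2$; a one-variable motion with the ordinary Schwarz lemma would only recover the real point $t=2/k$. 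Your sketch via the Beurling transform and area distortion is in the right spirit but does not isolate this bidisk structure, which is the genuinely new ingredient.
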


In this work we provide some evidence for this conjecture by establishing partial radial invariance for the function $B_k$.
\begin{theorem} For every $0<k<1$ we have
\label{thm:main}
\begin{equation}
B_k(t) =1, \quad |t|=\frac{2}{k} \qquad \re  t \geqslant 2.
\end{equation}
\end{theorem}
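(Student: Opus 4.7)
The lower bound $B_k(t_0)\geq 1$ follows from classical spiral examples (conformal maps onto suitable logarithmic-spiral quasidisks), so I focus on the upper bound $B_k(t_0)\leq 1$. The plan is a double holomorphic deformation, of the quasiconformal extension of $f$ and of the complex exponent, together with a subharmonicity argument that reduces matters to the trivial case $t=2$, where $B_k(2)=1$ since $\int_{\DD}|f'|^2\,dA=\mathrm{Area}(\Omega)<\infty$.

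Given $f$ with $k$-qc extension $F$ and Beltrami coefficient $\mu$ ($\|\mu\|_\infty\leq k$, $\mu\equiv 0$ in $\DD$), set $\mu_0:=\mu/k$. For each $\lambda\in\DD$ let $f_\lambda$ be the normalized solution of $\bar\partial f_\lambda=\lambda\mu_0\,\partial f_\lambda$; then $f_0=\id$, $f_k=F$, each restriction $f_\lambda|_\DD$ is conformal, and $\lambda\mapsto\log f_\lambda'(z)$ is holomorphic on $\DD$ for every $z\in\DD$. For $t_0$ with $|t_0|=2/k$ and $\re t_0\geq 2$, I would take a holomorphic $T:\DD\to\C$ with $T(k)=t_0$; the natural candidate is a M\"obius map $\DD\to\{\re\zeta>2\}$ sending $k$ to $t_0$ (possible since $\re t_0\geq 2$, with the boundary case $\re t_0=2$ handled by a limiting argument), so that $\re T(\lambda)\to 2$ as $|\lambda|\to 1$. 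The holomorphic-in-$\lambda$ family
\[ H_\lambda(z):=f_\lambda'(z)^{T(\lambda)}=\exp\bigl(T(\lambda)\log f_\lambda'(z)\bigr) \]
makes $|H_\lambda(z)|$ subharmonic in $\lambda$ (as the modulus of a holomorphic function), so
\[ I(\lambda,r):=\int_{|z|=r}|H_\lambda(z)|\,|dz| \]
is subharmonic on $\DD$.

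The target $\beta_f(t_0)\leq 1$ amounts to $I(k,r)=O((1-r)^{-1})$. Applying the maximum principle (via an approximation $|\lambda|\leq\rho<1$ with $\rho\to 1^-$), it suffices to bound $\sup_{|\lambda|\to 1}I(\lambda,r)$ by $O((1-r)^{-1})$. On $|\lambda|=1$ one has $\re T(\lambda)=2$, giving
\[ |H_\lambda(z)|=|f_\lambda'(z)|^2\,e^{-\im T(\lambda)\,\arg f_\lambda'(z)}. \]
The factor $|f_\lambda'|^2$ contributes $O((1-r)^{-1})$ via the area estimate, so the whole argument hinges on uniform control of the twist factor $e^{-\im T\,\arg f_\lambda'}$, to be achieved through BMO / John--Nirenberg estimates on $\arg f_\lambda'$ for the limiting quasicircle conformal maps $f_\lambda$. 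This is the main obstacle: the BMO seminorm of $\arg f_\lambda'$ depends on the qc constant of $f_\lambda$, which degenerates as $|\lambda|\to 1$, and the sharp matching $|t_0|=2/k$ requires a precisely calibrated M\"obius exponent $T$. The restriction $\re t_0\geq 2$ enters exactly here, placing $t_0$ in the closed half-plane $\{\re\zeta\geq 2\}$, the natural target of $T$ and the locus enabling the reduction to $\re T=2$ on the boundary; points with $\re t_0<2$ fall outside the scope of this method and remain part of the open half of circular Brennan.
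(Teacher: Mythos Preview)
Your proposal is not a proof: you explicitly identify the gap yourself and do not close it. The one-variable motion $f_\lambda$ together with a M\"obius exponent $T\colon\DD\to\{\re\zeta>2\}$ cannot be made to work by a maximum-principle argument alone. On any approximating circle $|\lambda|=\rho<1$ you still have $\re T(\lambda)>2$ and an uncontrolled twist factor; pushing to the ideal boundary $|\lambda|=1$ makes both $\im T(\lambda)$ and the distortion of $f_\lambda$ blow up simultaneously, so the integrand $|f_\lambda'|^2 e^{-\im T(\lambda)\arg f_\lambda'}$ has no uniform bound in $r$. A BMO/John--Nirenberg estimate on $\arg f_\lambda'$ does not rescue this, since its constant depends on the quasiconformal constant of $f_\lambda$, which degenerates. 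In effect your scheme reproduces the Becker--Pommerenke heuristic mentioned in the introduction, and like that heuristic it needs an a~priori input of essentially the same strength as the theorem itself.

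The paper avoids this degeneration by a genuinely different device: a \emph{two-parameter} holomorphic motion $f_{\lambda,\eta}$ on the bidisk, obtained by using the conformality of $f$ on $\DD$ to insert a second Beltrami deformation $\eta\,\overline{\mu(1/\bar z)}$ inside the disk (while keeping $\lambda\,\mu$ outside). This produces the reflection symmetry $\Phi_{(\lambda,\eta)}=\overline{\Phi_{(\bar\eta,\bar\lambda)}}$ for $\Phi=z f'_{\lambda,\eta}/f_{\lambda,\eta}$ on $|z|=1$. The $L^2$ area bound $\|\Phi_{(\lambda,\eta)}\|_2\leqslant C$ now holds \emph{uniformly} for all $(\lambda,\eta)$ in (slightly shrunk) $\DD^2$, with no boundary degeneration. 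The holomorphic amplification (Proposition~\ref{prop:interpolation}) then reduces matters to a three-point Nevanlinna--Pick problem on the bidisk with the symmetry constraint $\Psi(\lambda,0)=\overline{\Psi(0,\bar\lambda)}$; its explicit solution (Lemma~\ref{lemma:NP}) is what singles out exactly the range $|t|=2/k$, $\re t\geqslant 2$. The restriction $\re t\geqslant 2$ is not, as in your sketch, a convenient target half-plane for a M\"obius exponent, but the geometric output of the bidisk Pick analysis: it corresponds to the ``first disk'' $|w-1|\leqslant k$ in $\mathcal W_k$.
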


We believe that in fact the identity
\begin{equation}
\label{eq:qcbrennan}
B_k(t) =1, \quad |t|=\frac{2}{k}
\end{equation}
holds without restriction on the real part of $t$. This is basically equivalent to Conjecture \ref{conj:circular-brennan}. On one hand, in view of the fractal approximation principle \cite{makarov99}, we have $B(t)=\sup_{k<1} B_k(t)$.
In the other direction, we sketch an argument along the lines of \cite{BP}. We first embed our map $f$
into a standard holomorphic motion $f_\lambda$, with $f_k=f$ and $f_0(z)=z$. We then consider the subharmonic function for a fixed radius $r<1$.
\[ \lambda \mapsto \int_{|z|=r} \left| f_\lambda'(z)^{2/\lambda} \right| |dz|, \quad \lambda \in \DD.
\]
Conjecture \ref{conj:circular-brennan} controls the growth rates in $r$ and the maximum principle leads to the upper bound in \eqref{eq:qcbrennan}. The lower bounds are easy, see Remark \ref{rmk:trivialbounds}. The only issue with the above sketch is that we need uniform estimates (independent of $f_\lambda$) of the growth rates. Actually, Becker and Pommerenke postulated this stronger uniform version in \cite{BP}.

The approach of this paper is similar in spirit to the argument above: to use the maximum principle to upgrade a priori bounds with the help of holomorphic dependence. The a priori information we will use is the simple bound $B(2)=1$ (and related area bounds for quasiconformal maps). 
We amplify these using the holomorphic interpolation technique of \cite{AIPS,AIPS2}. 
Instead of a standard holomorphic motion, we consider here a holomorphic motion in two variables with certain symmetry properties. Conformality of the map is exploited at this point, it gives us room to change the conformal structure in terms of the second variable. This extra structure of the motion leads to an improvement compared to general  quasiconformal multifractal bounds of \cite{AIPS2}.
In this setting, the maximum principle is invoked in the form of a Nevanlinna-Pick interpolation problem on the bidisk. 

The special case $B_k(2/k)=1$ of Theorem \ref{thm:main} was proved in \cite{prause-smirnov}. Recently, Hedenmalm obtained \cite{hedenmalm15} a general bound (without restriction on $t$) of the form 
\[ B_k(t) \leqslant (1+7k)^2 \frac{k^2 |t|^2}{4}, \quad t \in \mathbb{C}.
\]
When specialised to the values of $t$ covered by Theorem \ref{thm:main} this is weaker by the factor $(1+7k)^2$.
Finally, let us note that the strongest conjecture in this area \cite{jones,prause-smirnov} of the form
\begin{equation*}
%\label{eq:k2t2over4}
 B_k(t)=k^2 |t|^2/4, \qquad |t| \leqslant \frac{2}{k},
\end{equation*}
has recently been disproved by Ivrii \cite{ivrii}. Namely, he shows that there are better bounds asymptotically as $k \to 0$ and $k|t| \to 0$.
Ivrii's approach builds on the recent developments of \cite{aipp,hedenmalm16} and on \cite{BP}.
In this paper, we consider the opposite asymptotic regime, when $k|t| \approx 1$ and $k$ arbitrary. This regime is related to the following phase transition phenomenon.

\subsection{Phase transition}
Using the convexity of $B_k(t)$ and the easy bounds of Remark \ref{rmk:trivialbounds}, it follows that Theorem \ref{thm:main} can be extended to
\[ B_k(t) = k|t|-1, \quad \re t \geqslant k|t|,  \quad |t| \geqslant \frac{2}{k}.
\]
Consider an angle $\theta \in [0,2 \pi)$ with $\cos \theta \geqslant k$. It means that for large positive values of $t$, namely when $t \geqslant 2/k$, the function $B_k(te^{i\theta})$ is linear in $t$. From considerations of small values of $t$, in each of these directions there is a phase transition point where the function becomes strictly convex.
The existence of such a phase transition phenomenon for $B(t)$ in the negative direction was shown by Carleson and Makarov \cite{carleson-makarov} and later extended to arbitrary direction in \cite{binder}. As explained in \cite{carleson-makarov}, this  can be interpreted as the transition from isolated singularities to fractal distribution of singularities in the extremal contribution to integral means.

\paragraph{\em Acknowledgements}
It is my pleasure to thank H\aa kan Hedenmalm, Oleg Ivrii and Stas Smirnov for several discussions on topics related to the present paper.

\section{Pointwise bounds}

Classical distortion estimates in conformal mappings describe the optimal rate of growth for the derivative.
For instance, we have \cite[p.~66]{pommerenke-univalent}

\begin{proposition}
\label{prop:prop1}
For $f \in \mathcal{S}$, we have
\[ \left| \log \frac{z f'(z)}{f(z)} \right| \leqslant \log \frac{1+|z|}{1-|z|}.
\]
\end{proposition}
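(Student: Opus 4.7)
The function $\phi(z):=\log\frac{zf'(z)}{f(z)}$ is well-defined and holomorphic on $\DD$, with $\phi(0)=0$: indeed $f'$ and $f(z)/z$ are nonvanishing on $\DD$ and both equal $1$ at the origin since $f\in\mathcal{S}$. A direct computation at the Koebe function $K(z)=z/(1-z)^2$ gives $\phi_K(z)=\log\frac{1+z}{1-z}$, whose modulus at $z=r\in(0,1)$ is exactly $\log\frac{1+r}{1-r}$; thus the bound is sharp, and any argument must match this extremal.

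My first step is a reduction via the Koebe transform. For $z_0\in\DD$ fixed, put
\[
F(\zeta):=\frac{f\bigl(\tfrac{\zeta+z_0}{1+\bar z_0\zeta}\bigr)-f(z_0)}{(1-|z_0|^2)f'(z_0)},
\]
so $F\in\mathcal{S}$. Evaluating at $\zeta=-z_0$, where the disk automorphism vanishes, and simplifying yields the identity
\[
\phi(z_0)=-\log\frac{(1-|z_0|^2)F(-z_0)}{-z_0}.
\]
Writing $w=-z_0$, the Proposition becomes equivalent to the following claim: for every $F\in\mathcal{S}$ and every $w\in\DD$,
\[
\left|\log\frac{(1-|w|^2)F(w)}{w}\right|\;\leqslant\;\log\frac{1+|w|}{1-|w|}.
\]
The real part of the quantity on the left equals $\log\frac{(1-|w|^2)|F(w)|}{|w|}$ and lies in $[-\log\tfrac{1+|w|}{1-|w|},\log\tfrac{1+|w|}{1-|w|}]$ by the classical Koebe distortion theorem. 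The imaginary part equals $\arg F(w)/w$ and is controlled by a rotation-type estimate.

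The main obstacle is obtaining the sharp \emph{joint} Pythagorean bound $(\re)^2+(\im)^2\leqslant\log^2\frac{1+|w|}{1-|w|}$: the individual bounds, combined via the triangle inequality, only give a bound with constant $\sqrt 2$. To recover the sharp constant $1$, I would invoke the Grunsky (or Golusin) inequality for $F$ at the two nodes $\{0,w\}$, with a quadratic form in complex weights chosen so that equality is attained at the Koebe function; this couples $\log F(w)/w$ and $\log F'(w)$ in exactly the right way to yield the Pythagorean bound. As an alternative route, one may embed $f$ in a single-slit L\"owner chain and integrate a differential identity for $\phi_t$ driven by a term of unit modulus $(1+\kappa(t)z)/(1-\kappa(t)z)$, propagating the estimate from $t=\infty$ (where it is trivial) back to $t=0$. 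Both routes identify the extremal as a rotation of the Koebe function and deliver the sharp constant.
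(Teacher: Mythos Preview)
The paper does not supply its own proof of this proposition; it is quoted as a classical distortion estimate with a reference to Pommerenke's book, p.~66. So there is no in-paper argument against which to compare.

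That said, your proposal is not a complete proof. The Koebe-transform reduction is correct, and the reformulated inequality
\[
\left|\log\frac{(1-|w|^2)F(w)}{w}\right|\;\leqslant\;\log\frac{1+|w|}{1-|w|}
\]
is a genuine equivalent of the proposition. You also correctly identify that separate bounds on the real and imaginary parts would lose a factor $\sqrt{2}$, so a coupled argument is required. But at the decisive step you only write ``I would invoke the Grunsky (or Golusin) inequality \ldots with a quadratic form in complex weights chosen so that equality is attained at the Koebe function,'' and then mention the L\"owner chain method as ``an alternative route,'' without executing either. No specific Golusin inequality is written down, no weights are exhibited, and no L\"owner differential identity for $\phi_t$ is derived or integrated. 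This is exactly the step that carries the sharp constant, so as it stands the argument has a genuine gap: you have reduced the problem to an equivalent one of the same difficulty and then gestured at two possible methods without performing the work.

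Both routes you name are standard and can be made to yield the sharp bound; to turn the proposal into a proof you must actually carry one of them out in full.
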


Here $\mathcal S$ denotes the class of univalent maps of the disk with normalizations $f(0)=0$, $f'(0)=1$.
Similarly, we will use notation $\mathcal S_k$ for maps in $\mathcal S$ admitting $k$-quasiconformal extension to $\mathbb{C}$. An immediate application of Lehto's majorant principle \cite[p.~77]{lehto} shows

\begin{proposition}
\label{prop:propk}
For $f \in \mathcal{S}_k$, we have
\[ \left| \log \frac{z f'(z)}{f(z)} \right| \leqslant k \log \frac{1+|z|}{1-|z|}.
\]
\end{proposition}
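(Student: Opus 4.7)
The plan is to apply Lehto's majorant principle, for which the main task is packaging Proposition~\ref{prop:prop1} as a uniformly bounded holomorphic functional on $\mathcal{S}$ that vanishes at the identity. Fix $z \in \DD$ and set
\[ \Phi(f) = \log \frac{z f'(z)}{f(z)}. \]
Because $f \in \mathcal{S}$ forces $f(w)/w$ to be a non-vanishing holomorphic function on $\DD$ equal to $1$ at the origin, the quantity $z f'(z)/f(z)$ is a non-vanishing holomorphic functional of $f$ that equals $1$ when $f = \id$. Choosing the branch with $\log 1 = 0$ makes $\Phi$ single-valued on $\mathcal{S}$ with $\Phi(\id) = 0$, and Proposition~\ref{prop:prop1} gives $|\Phi(f)| \leqslant M$ with $M := \log \tfrac{1+|z|}{1-|z|}$.

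Given $f \in \mathcal{S}_k$, let $\mu$ be the Beltrami coefficient of its $k$-quasiconformal extension (so $\mu = 0$ on $\DD$ and $\|\mu\|_\infty \leqslant k$). Embed $f$ in the standard holomorphic motion $\{f_\lambda\}_{\lambda \in \DD}$ obtained by solving the Beltrami equation with coefficient $(\lambda/k)\mu$ and normalizing to remain in $\mathcal{S}$; thus $f_0 = \id$, $f_k = f$, and $f_\lambda \in \mathcal{S}_{|\lambda|}$. The Ahlfors--Bers theorem provides holomorphic dependence of $f_\lambda(w)$, and hence of $f_\lambda'(w)$, on $\lambda$ for each fixed $w \in \DD$. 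Since every $f_\lambda$ is conformal on $\DD$, the ratio $f_\lambda(z)/z$ is non-vanishing and varies holomorphically in $\lambda$, so $g(\lambda) := \Phi(f_\lambda)$ lifts to a single-valued holomorphic function on $\DD$ with $g(0) = 0$.

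By Proposition~\ref{prop:prop1} applied pointwise to each $f_\lambda \in \mathcal{S}$ we have $|g(\lambda)| \leqslant M$ for every $\lambda \in \DD$. Schwarz's lemma applied to $g/M$ then yields $|g(\lambda)| \leqslant M |\lambda|$, and specialising to $\lambda = k$ gives $|\Phi(f)| \leqslant kM$, which is the desired inequality. The only genuinely delicate point is verifying that a single-valued branch of the logarithm persists as $\lambda$ sweeps across $\DD$; this is handled by the non-vanishing and holomorphic variation of $f_\lambda(z)/z$ noted above, after which the argument is a clean Schwarz-lemma estimate.
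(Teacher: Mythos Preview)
Your proof is correct and is precisely the argument the paper invokes: the paper simply cites Lehto's majorant principle, and what you have written is exactly that principle unpacked---embed $f$ in the standard holomorphic motion, note that the functional $\Phi$ is bounded by $M$ on all of $\mathcal S$ and vanishes at the identity, and apply the Schwarz lemma to $\lambda \mapsto \Phi(f_\lambda)$. The only thing to add is that your branch discussion at the end is best phrased via the jointly nonvanishing function $(\lambda,w)\mapsto w f_\lambda'(w)/f_\lambda(w)$ on $\DD\times\DD$, which guarantees that the $\lambda$-branch you choose coincides with the $w$-branch used in the definition of $\Phi$.
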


One can alternatively describe the pointwise bounds in terms of scaling and rotation exponents.
For this, let us introduce a few definitions.
For a simply connected domain $\Omega \subset \C$, consider the harmonic measure $\omega$ with a basepoint $w_0 \in \Omega$. The choice of the basepoint will be irrelevant in what follows. We say that the harmonic measure {\em scales with exponent} $\alpha >0$ at $x \in \D \Omega$ if  
\[ \lim_{r \to 0} \frac{\log \omega B(x,r)}{\log r} = \alpha.
\]
In a dual way, we will measure the rotation near $x$. Let $\Omega_r$ be the connected component of $\Omega \setminus B(x,r)$ containing $w_0$. 
We say that $\Omega$ {\em rotates at rate} $\gamma \in \R$ if 
\[ \lim_{r \to 0} \frac{ \inf_{w \in \partial \Omega_r \cap B(x,r)} \arg(w-x)}{\log r} = \gamma.
\]
The branch of the argument is selected so that $\arg(w_0-x) \in [0,2\pi)$.
In other words, $\gamma$ measures the rate of rotation (in the sense of \cite{AIPS2}) of how fast a curve should rotate around point $x$ in order to get to the $r$-neighbourhood of $x$ from $w_0$ within the domain $\Omega$.
Of course, in general, the limit in these scaling and rotation exponents need not exist. To rectify this, one could consider subsequential limits. In this case, it is important to require that we measure both scaling and rotation  simultaneously along the same subsequence. 

A well-known estimate of Beurling \cite[Corollary 9.3]{garnett-marshall} says that $\omega B(x,r) \leqslant C r^{1/2}$. In terms of the scaling exponent, this means that $\alpha$ (if exists) is at least $1/2$. A slightly more general version relates the scaling exponent to the twisting at a boundary point via the inequality, see \cite[Lemma 1]{binder}
\begin{equation}
\label{eq:beurling}
 \alpha \geqslant \frac{1}{2}(1+\gamma^2).
\end{equation}

\subsection{Examples}
The basic examples for extremal pointwise behaviour are more conveniently described as conformal maps of the upper half plane $\C_+=\{ z : \im z >0 \}$. Conjugating with an appropriate M\"obius transformation provides examples defined on the unit disk.
\begin{example}
\label{ex:basicexamples}
The {\em complex power map} (with the principal branch) $g_\sigma \colon \C_+ \to \C$, $g_\sigma(z)=z^\sigma$ defines a conformal map as long as $|\sigma-1| \leqslant 1$ and $\sigma \neq 0$.
\end{example}

With the notation $\sigma=\frac{1+i \gamma}{\alpha}$, $\alpha>0$, $\gamma \in \R$, the conformal map $g_\sigma$ maps the positive and negative half-lines of $\R$ to logarithmic spirals of rotation rate $\gamma$, while harmonic measure scales with exponent $\alpha$ at the origin. The condition $|\sigma-1| \leqslant 1$, $\sigma \neq 0$ is needed to ensure injectivity. Equivalently, this condition is exactly \eqref{eq:beurling}.

Observe that $g_{\sigma_0}$ has a $|\sigma_0-1|$-quasiconformal extension to the lower half plane $\C_-$. This follows readily from the $\lambda$-lemma \cite[Section 12.3]{AIMb}. Indeed, $g_{\sigma_0}$ embeds to the holomorphic family $g_\sigma$ parametrised by $\sigma-1 \in \DD$. In Section \ref{se:example} we describe the extensions and the corresponding holomorphic motion explicitly.

\begin{remark}
\label{rmk:trivialbounds}
The pointwise estimates of Proposition \ref{prop:prop1} and \ref{prop:propk} give the following \emph{trivial upper bounds} for integral means spectra
\[ B(t) \leqslant |t| \qquad B_k(t) \leqslant k |t| \qquad t \in \mathbb{C}.
\]
The {\em trivial lower bounds} are obtained by considering Example \ref{ex:basicexamples} and read as
\[ B(t) \geqslant |t|-1 \qquad B_k(t) \geqslant k |t|-1.
\] 
\end{remark}

\section{A three-point Nevanlinna-Pick problem on the bidisk}
\label{se:NP}

We are going to make use of the following three-point Nevanlinna-Pick problem on the bidisk. In what follows, $U=\{w \colon \re w>0 \}$ will denote the right-half plane.

\begin{problem}
\label{prob:threepoint}
Given $\lambda_0 \in \DD$, define the set $\mathcal{W}_{\lambda_0}$ as 
\[ \mathcal{W}_{\lambda_0}=\left\{ w \in \mathbb{C} \; | \; \Psi \colon \DD^2 \to U, \Psi(0,0)=1, \Psi(\lambda_0,0)=w, \Psi(0,\bar \lambda_0)= \bar w \right\},
\]
where $\Psi$ ranges over all holomorphic mappings of the bidisk $\Psi \colon \DD^2 \to U$. Describe $\mathcal{W}_{\lambda_0}$ in terms of the parameter $\lambda_0$.
\end{problem}

First, let us observe that $\mathcal{W}_{\lambda_0}$ only depends on $|\lambda_0|$. This can be seen by considering transformations of the form $\Psi(e^{i\theta} \lambda,e^{-i \theta} \eta)$.

\begin{lemma} 
\label{lemma:NP}
Let $|\lambda_0|=k<1$.
The solution of Problem \ref{prob:threepoint} is described by the convex hull of the union of two disks (see Figure \ref{fig:3point})
\begin{equation}
\label{eq:two-disks}
 \mathcal{W}_{\lambda_0}=\mathcal{W}_k=\text{Conv}\left( \left\{w \colon |w-1| \leqslant k \right\} \cup \left\{ w \colon \left| \frac{1}{w}-1 \right| \leqslant k \right\} \right).
\end{equation}
\end{lemma}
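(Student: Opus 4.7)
I plan to prove the two inclusions $\text{Conv}(D_1\cup D_2)\subset \mathcal{W}_k$ and $\mathcal{W}_k\subset \text{Conv}(D_1\cup D_2)$, where $D_1=\{w:|w-1|\le k\}$ and $D_2=\{w:|1/w-1|\le k\}$.

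First I would record two structural properties. The set $\mathcal{W}_k$ is convex: if $\Psi_0,\Psi_1\colon\DD^2\to U$ both solve the interpolation, so does $t\Psi_1+(1-t)\Psi_0$ for any $t\in[0,1]$, since $U$ is convex. The set $\mathcal{W}_k$ is also invariant under $w\mapsto 1/w$: the inversion $\zeta\mapsto 1/\zeta$ is an automorphism of $U$ fixing $1$, so $\Psi\mapsto 1/\Psi$ sends any solution for $(1,w,\bar w)$ to one for $(1,1/w,\overline{1/w})$. Together these observations reduce the sufficiency half of the lemma to showing just $D_2\subset\mathcal{W}_k$.

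For this I set $\Psi(\lambda,\eta):=g(\lambda)+h(\eta)-1$, with $g,h\colon\DD\to H_{1/2}:=\{\re\zeta>1/2\}$ chosen so that $g(0)=h(0)=1$, $g(\lambda_0)=w$, $h(\bar\lambda_0)=\bar w$. The Cayley transform $\zeta\mapsto 1-1/\zeta$ maps $H_{1/2}$ conformally onto $\DD$ sending $1$ to $0$, so by Schwarz such $g,h$ exist precisely when $|1/w-1|\le k$; a concrete choice is $g(\lambda)=1/(1-(1-1/w)\lambda/\lambda_0)$. Since $\re g,\re h>1/2$, we have $\re\Psi>0$, so $\Psi\colon\DD^2\to U$, and $\Psi$ interpolates the data by construction. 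This gives $D_2\subset\mathcal{W}_k$; by the inversion symmetry $D_1\subset\mathcal{W}_k$; and by convexity $\text{Conv}(D_1\cup D_2)\subset\mathcal{W}_k$.

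For the opposite inclusion I would invoke the three-point Nevanlinna-Pick condition on the bidisk (Agler's theorem). Under the Cayley transform $z=(w-1)/(w+1)$ sending $U$ to $\DD$, solvability becomes: find $\tilde\Psi\colon\DD^2\to\DD$ with values $0,z,\bar z$ at $(0,0),(\lambda_0,0),(0,\bar\lambda_0)$. Agler's theorem asserts this is equivalent to the existence of Hermitian positive semidefinite $F,G\in\C^{3\times 3}$ with $E_{ij}=L_{ij}F_{ij}+N_{ij}G_{ij}$ where $E_{ij}=1-\bar z_i z_j$ and $L,N$ are the Pick kernels of the two bidisk coordinates. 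Exploiting the symmetry $\Psi^*(\lambda,\eta):=\overline{\Psi(\bar\eta,\bar\lambda)}$, which also solves the same interpolation, I can assume $(F,G)$ is fixed under the transposition $(2\,3)$ composed with conjugation, reducing the SDP to a feasibility problem in a few scalar parameters such as $\alpha=k^2F_{22}$ and $\beta=k^2G_{33}$. The main obstacle is this last feasibility analysis: translating the principal minor inequalities for $F,G\succeq 0$ into the geometric description of $\text{Conv}(D_1\cup D_2)$. I expect the boundary of the feasible region to be traced out by two degenerate extremal configurations — rank drops in $F$ recovering $\partial D_2$, and rank drops in $G$ recovering $\partial D_1$ — with the interior obtained as convex combinations, consistent with the convexity and inversion symmetry already observed for $\mathcal{W}_k$.
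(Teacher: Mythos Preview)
Your sufficiency direction ($\text{Conv}(D_1\cup D_2)\subset\mathcal{W}_k$) is complete and clean: the convexity of $\mathcal{W}_k$, the inversion symmetry $w\mapsto 1/w$, and the explicit construction $\Psi=g(\lambda)+h(\eta)-1$ with $g,h\colon\DD\to H_{1/2}$ are all correct. This half is in fact handled more explicitly than in the paper, which leaves it implicit in the formulas it derives.

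The genuine gap is in necessity. You correctly identify Agler's positivity criterion as a route, and the symmetry reduction is the right instinct, but after that you write ``the main obstacle is this last feasibility analysis'' and ``I expect the boundary \ldots'' --- this is a plan, not a proof. Carrying out the semidefinite feasibility analysis by hand for generic $z$ is not routine, and you have not done it.

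The paper avoids this computation entirely by a different mechanism. Rather than work with the SDP condition, it passes to the \emph{extremal} problem (rescale so that $\|\Phi\|_\infty=1$ with no smaller-norm solution) and invokes the structure theorem of Agler--McCarthy for the three-point problem on the bidisk: an extremal, non-degenerate, genuinely three-point solution is a \emph{unique} rational inner function of degree~$2$ whose quadratic part involves only the mixed term $\lambda\eta$. This forces $\Phi=\lambda\eta\,\overline{p(1/\bar\lambda,1/\bar\eta)}/p(\lambda,\eta)$ with $p$ linear, and then your symmetry $\Phi(\lambda,\eta)=\overline{\Phi(\bar\eta,\bar\lambda)}$ (now applied to the \emph{unique} solution) yields $c_0^2=\bar c_0^2$, splitting into exactly two one-parameter families that trace out $\partial D_1$ and $\partial D_2$. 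The remaining case --- when the two-point subproblem at $(\lambda_0,0),(0,\bar\lambda_0)$ is already extremal, i.e.\ $|\im w|=k|w|$ --- is handled separately using the description of functions with $\tilde\Phi(\lambda,\lambda)=\lambda$, and produces precisely the tangent segments of the convex hull. The payoff of this route is that uniqueness of the extremal inner function converts the symmetry into an \emph{equation} on coefficients, rather than merely a symmetry of a feasibility set; this is what collapses the problem to two explicit families instead of an SDP.
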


\begin{figure}
\includegraphics[width=6cm]{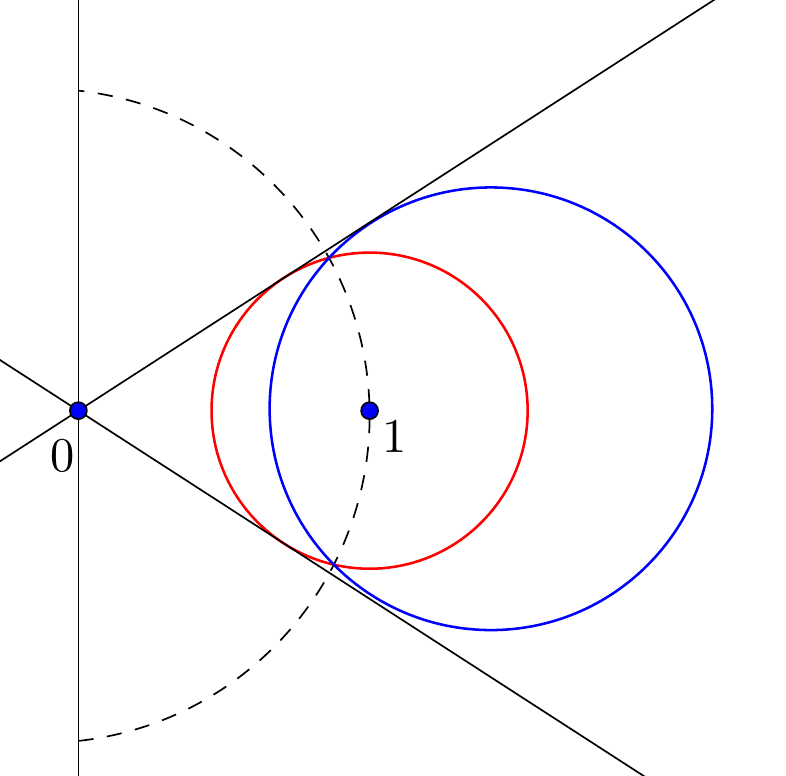}
\caption{$\mathcal{W}_\lambda$ is the union of the red and blue disks together with their convex hull.}
\label{fig:3point}
\end{figure}

In connection with the Nevanlinna-Pick interpolation, it is more customary to consider holomorphic maps into ${\DD}$. We may achieve this by considering $\Phi=\frac{\Psi-1}{\Psi+1}$ and the corresponding three-point problem. We shall consider the two problems in parallel as we will need to move back-and-forth between the two settings. 
As is well-known, in the one variable case, the positive semi-definiteness of Pick's matrix is equivalent to the existence of an interpolating map.
Agler \cite{agler} found an analogous necessary and sufficient condition for interpolation problems in the bidisk. The three-point case has been analysed in detail in \cite{agler-mccarthy}. In fact, Problem \ref{prob:threepoint} is very much related to Example 3.4 from \cite{agler-mccarthy}. The difference is that Example 3.4 assumes that $w>0$, while we need to consider complex values. Nevertheless, the inner functions found on \cite[p.~236]{agler-mccarthy} will be relevant.

\begin{proof}[Proof of Lemma \ref{lemma:NP}]
We go through the analysis based on \cite{agler-mccarthy}.
First, let us consider only the two-point problem given by the nodes $(\lambda_0,0)$ and $(0,\bar \lambda_0)$.
We must have $d_U(w,\bar w) \leqslant d_{\DD^2}((\lambda_0,0),(0,\bar \lambda_0))=d_{\DD}(0,k)$ in terms of the hyperbolic and Kobayashi distances. A short calculation reveals this is equivalent to the tangent cone displayed in Figure \ref{fig:3point}
\begin{equation}
\label{eq:cone}
 | \im w| \leqslant k |w|.
\end{equation}

Consider now the corresponding three-point problem in terms of $ \Phi=\frac{\Psi-1}{\Psi+1} \colon \DD^2 \to \DD$,
\[ (0,0) \mapsto 0,\quad (\lambda_0,0) \mapsto \zeta, \quad (0,\bar \lambda_0) \mapsto \bar \zeta, \qquad \zeta=\frac{w-1}{w+1}.
\]
We may assume that $\zeta \neq 0$ and by multiplying with a positive number that the problem is {\em extremal}. That is, $\|\Phi \|_\infty=1$ and there are no solutions with smaller norm. The problem is {\em non-degenerate} in the sense that there are no solutions that depend only on one of the coordinate functions (because $\zeta \neq 0$). It may happen that a sub two-point problem is already extremal, but assume for now, that this is not the case and the problem is a {\em genuine} three-point problem. Under these circumstances (extremal, non-degenerate, genuine) we have a unique solution which is a rational inner function of degree $2$ \cite[Theorem 0.1]{agler-mccarthy}. Moreover, the second order terms only involve the mixed product $\lambda \eta$ \cite[p. 234]{agler-mccarthy}. Since $\Phi(0,0)=0$, we have the following representation (see \cite{rudin})
\[ \Phi(\lambda,\eta)=\frac{\lambda \eta \, \overline{p(1/\bar \lambda,1/\bar \eta)}}{p(\lambda,\eta)},
\]
where $p$ is a linear polynomial which does not vanish on $\DD^2$,
\[ p(\lambda,\eta)= c_0+c_1 \lambda + c_2 \eta \neq 0.
\]
Since the function $\overline{\Phi(\bar \eta, \bar \lambda)}$ solves the same three-point problem and the solution is unique we have the symmetry
\[ \Phi(\lambda,\eta)=\overline{\Phi(\bar \eta, \bar \lambda)}.
\]
This leads to the following equations in terms of the coefficients
\[ c_0^2=\bar c_0^2, \quad c_0 c_1=\bar c_0 \bar c_2.
\]
There are two possibilities: either $c_0$ is real or purely imaginary. Observe, that we are free to multiply the coefficients by a real number without affecting the function $\Phi$. Thus the two cases can be normalized to
$c_0=2$ or $c_0=2i$. In the first case, we are led to
\[ p(\lambda,\eta)= 2+c\lambda+ \bar c \eta, \quad |c| \leqslant 1.
\]
The condition $|c| \leqslant 1$ is enforced because $p$ must not vanish on $\DD^2$.
Solving for $\Psi=\frac{1+\Phi}{1-\Phi}$ we arrive at
\begin{equation}
\label{eq:firstdisk} 
\Psi(\lambda,\eta)=\frac{1+\lambda \eta +c \lambda +\bar c \eta}{1-\lambda \eta}.
\end{equation}
In conclusion, $w=\Psi(\lambda_0,0)=1+c\lambda_0$ and so $|w-1| \leqslant k$. This is the first disk in \eqref{eq:two-disks}.

In the second case, when $c_0=2i$, we are led to  
\[ p(\lambda,\eta)=2i+c \lambda -\bar c \eta, \quad |c| \leqslant 1.
\]
This time, we have
\[ \Psi(\lambda,\eta)=\frac{1-\lambda \eta}{1-i c \lambda+i \bar c \eta +\lambda \eta}.
\]
In this case, $1/w=1/\Psi(\lambda_0,0)=1-ic\lambda_0$ and we now have $|1/w-1| \leqslant k$. This is the second disk in \eqref{eq:two-disks}. So far we covered the cases of genuine three-point (extremal) problems. Obviously, $\mathcal{W}_{\lambda_0}$ is a convex set, we will show that by taking the convex hull in \eqref{eq:two-disks} we also cover cases when the three-point problem degenerates to a two-point problem.
 
It may happen that a sub two-point problem is already extremal but only for the pair $(\lambda_0,0)$ and $(0,\bar \lambda_0)$ -- otherwise $\Phi$ would depend only on one variable.
From now on we assume that two-point problem corresponding to these nodes is extremal, that is we have equality in \eqref{eq:cone}. In such a situation $\Phi$ has to be a M\"obius transformation along an embedded analytic disk through the two nodes. By changing coordinates by an automorphism of $\DD^2$ we may arrange things so that the embedded disk is the diagonal $\{ (\lambda,\lambda) \colon \lambda \in \DD \}$ and the function is identity on the diagonal. This leads to the following situation in terms of the new holomorphic function $\tilde \Phi \colon \DD^2 \to \DD$
\[ \tilde \Phi(\lambda,\lambda)=\lambda, \quad \tilde \Phi(\bar \zeta, \zeta)=0 \quad \text{with } \zeta=\frac{w-1}{w+1}.
\]
We will show that necessarily $|\re \zeta| \leqslant |\zeta|^2$. Indeed, all such functions are of the following form (see \cite[(11.81)]{agler-mccarthy-book})
\[ \tilde \Phi(\lambda,\eta)=\frac{t \lambda+(1-t) \eta -\theta(\lambda,\eta) \lambda \eta}{1-\left( (1-t) \lambda +t \eta \right) \theta(\lambda,\eta)},
\]
where $t\in[0,1]$ and $\theta \in H^\infty_1(\DD^2)$. From $\tilde \Phi(\bar \zeta,\zeta)=0$ we find that
\[ t \bar \zeta+(1-t)  \zeta= \theta(\bar \zeta, \zeta) |\zeta|^2,
\]
and conclude with
\[|\re \zeta | \leqslant | t \bar \zeta+(1-t) \zeta | =|\theta(\bar \zeta, \zeta)| \cdot |\zeta|^2 \leqslant |\zeta|^2.
\]
In terms of $w$, we arrive at
\begin{equation}
\label{eq:2point}
 1-k^2 \leqslant \re w \leqslant 1,
\end{equation}
where we also used the fact that $|\im w|=k|w|$. 
Condition \eqref{eq:2point} corresponds exactly to the contribution of the convex hull in \eqref{eq:two-disks} as claimed. With all the cases analysed we concluded the proof of the lemma.

\end{proof}

\section{Holomorphic amplification}

Let $\,(\Omega,\sigma)\,$ be a measure space with its  $\,\mathscr L^p (\Omega,\sigma)\,$ spaces of complex-valued measurable functions. We will consider analytic families of measurable
functions in $\Omega$ parametrised by two variables. That is, jointly measurable functions $(x, (\lambda,\eta)) \mapsto \Phi_{(\lambda,\eta)} (x)$ defined on $\Omega \times \mathbb{D}^2$ outside of a measure zero set $E \subset \Omega$, for each fixed $x \in\Omega \setminus E$  the map $(\lambda,\eta) \mapsto \Phi_{(\lambda,\eta)}(x)$ is analytic in $\mathbb{D}^2$. The family is said to be \emph{non-vanishing} if $E$ can be chosen so that $\Phi_{(\lambda,\eta)}(x)\not=0$ 	for all $x \in\Omega\setminus E$ and for  all $(\lambda,\eta) \in \mathbb{D}^2$. 

We may use the holomorphic dependence to amplify a priori norm estimates in such families, see \cite[Interpolation Lemma]{AIPS}. With our application in mind, we will carry this out in the following setting.

\begin{proposition}
\label{prop:interpolation}
Suppose  $\;\{\Phi_{(\lambda,\eta) } \,;\, (\lambda,\eta) \in \DD^2\}$ is a holomorphic family of measurable functions, such that there is an exceptional set $E$ with $\sigma(E)=0$ and for every $(\lambda,\eta) \in \DD^2$, 
\begin{equation}
\label{eq:Phi-sym} 
  \Phi_{(\lambda,\eta)}(x)\not=0 \; \mbox{and } \;  \Phi_{(\lambda,\eta)}(x) = \overline{\Phi_{(\bar \eta, \bar \lambda)}(x)}, \quad \mbox{ for }  x \in \Omega \setminus E.
\end{equation}
Let $0 < p_0 < \infty$ and assume that   
\[ \Phi_{(0,0)} \equiv 1, \quad \mbox{and } \; \|\Phi_{(\lambda,\eta)} \|_{p_0} \leqslant 1.
\]
Then, for every $|\lambda|<1$ and for every complex exponent $t \in \C$ satisfying
\begin{equation} 
\label{eq:rangeoft}
|t|=\frac{p_0}{|\lambda|} \quad \mbox{and } \; \re t \geqslant p_0,
\end{equation}
we have 
\[ \int_\Omega \left| \Phi_{(\lambda,0)}^{\; t}  \right| \; \textrm{d} \sigma \leqslant 1.
\]
The choice of the continuous branch used here is determined by $\log \Phi_{(0,0)}\equiv 0$.
\end{proposition}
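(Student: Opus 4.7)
The plan is to reduce the inequality to a value of a holomorphic function on the bidisk and then invoke the three-point Nevanlinna--Pick analysis of Lemma~\ref{lemma:NP}. Set $k=|\lambda|$ and write $t=p_0 s$, so that $|s|=1/k$ and $\re s\geqslant 1$.

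\emph{Step 1: holomorphic reformulation.} Because $\Phi$ is non-vanishing with $\Phi_{(0,0)}\equiv 1$, the complex powers $\Phi^{t/2}$ and $\Phi^{\bar t/2}$ are single-valued holomorphic functions on the simply connected bidisk. The elementary identity $|w^{\,t}|=w^{t/2}\,\overline{w}^{\,\bar t/2}$, combined with the specialization $\overline{\Phi_{(\lambda,0)}(x)}=\Phi_{(0,\bar\lambda)}(x)$ of \eqref{eq:Phi-sym} at $\eta=0$, yields
\[
\int_\Omega \bigl|\Phi_{(\lambda,0)}^{\,t}\bigr|\,\dtext\sigma = F(\lambda,\bar\lambda),\quad F(\lambda,\eta):=\int_\Omega \Phi_{(\lambda,0)}(x)^{t/2}\,\Phi_{(0,\eta)}(x)^{\bar t/2}\,\dtext\sigma(x).
\]
The function $F\colon\DD^2\to\C$ is holomorphic, has $F(0,0)=\sigma(\Omega)\leqslant 1$, and satisfies the symmetry $F(\lambda,\eta)=\overline{F(\bar\eta,\bar\lambda)}$; in particular $F$ is real and non-negative on the anti-diagonal $\eta=\bar\lambda$, where the target quantity lives.

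\emph{Step 2: interpolant into $U$.} The plan is next to construct, from $F$ and from the hypothesis $\|\Phi_{(\lambda,\eta)}\|_{p_0}\leqslant 1$, a holomorphic map $\Psi\colon\DD^2\to U$ with $\Psi(0,0)=1$ whose three-point values $\Psi(\lambda,0)=w$ and $\Psi(0,\bar\lambda)=\bar w$ encode, after renormalization, the quantity $F(\lambda,\bar\lambda)$. The role of the hypothesis $\re t\geqslant p_0$ is to permit a generalized H\"older-type splitting of the integrand into two factors each dominated by the $L^{p_0}$-norm bound, in turn guaranteeing that the auxiliary integrand has positive real part on the whole bidisk; the explicit extremal inner functions \eqref{eq:firstdisk} serve as templates for the shape of $\Psi$.

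\emph{Step 3: conclusion via Lemma~\ref{lemma:NP}.} Lemma~\ref{lemma:NP} now forces $w\in\mathcal{W}_k$, so that either $|w-1|\leqslant k$, or $|1/w-1|\leqslant k$, or $w$ lies in the convex hull of these two discs. Unwinding the normalization of Step~2 converts this NP bound into $F(\lambda,\bar\lambda)\leqslant 1$, which is the desired inequality. The decisive and most delicate step is Step~2: identifying the precise auxiliary map $\Psi$ that stays in $U$ and that reproduces $F(\lambda,\bar\lambda)$ after renormalization. The boundary $\re t=p_0$ of the cone in \eqref{eq:rangeoft} is exactly the threshold beyond which such a splitting fails, and the dichotomy between the two discs in \eqref{eq:two-disks} reflects the two families of extremal interpolants that make the construction possible precisely in the prescribed range of $t$.
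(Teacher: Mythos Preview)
Your Step~1 is fine: the function $F(\lambda,\eta)=\int_\Omega \Phi_{(\lambda,0)}^{t/2}\Phi_{(0,\eta)}^{\bar t/2}\,d\sigma$ is holomorphic on $\DD^2$ for fixed $t$, satisfies the stated symmetry, and recovers the target integral on the anti-diagonal. But Step~2 is not a proof---it is a wish list, and you yourself flag it as ``the decisive and most delicate step.'' The difficulty is genuine: nothing forces $F$ (or any simple transform of it) to map $\DD^2$ into a half-plane or disk. Your H\"older hint does not work, since $|\Phi^{t/2}|=|\Phi|^{\re t/2}\,e^{-(\im t/2)\arg\Phi}$ contains a twisting factor not controlled by $\|\Phi\|_{p_0}$. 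Without an actual $\Psi\colon\DD^2\to U$ there is no Nevanlinna--Pick problem to feed into Lemma~\ref{lemma:NP}, and Step~3 has nothing to unwind.

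The paper's construction does not go through your $F$ at all. After reducing to $p_0=1$, one introduces a \emph{free} probability density $\wp$ on $\Omega$ and sets
\[
\Psi(\lambda,\eta)=\frac{1}{\int_\Omega \wp\log\wp\,d\sigma}\int_\Omega \wp\,\log\Phi_{(\lambda,\eta)}\,d\sigma,
\]
which uses the full family $\Phi_{(\lambda,\eta)}$, not only the slices $\eta=0$ and $\lambda=0$. Jensen's inequality together with $\|\Phi_{(\lambda,\eta)}\|_1\leqslant 1$ gives $\re\Psi\leqslant 1$, so $1-\Psi$ maps $\DD^2$ into $U$; the symmetry hypothesis \eqref{eq:Phi-sym} passes to $\Psi$. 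Lemma~\ref{lemma:NP} then yields $1-\Psi(\lambda,0)\in\mathcal W_{|\lambda|}$. The condition on $t$ enters only now, and purely geometrically: $1-1/t\in\partial\mathcal W_{|\lambda|}$ with a vertical supporting line there, hence $\re\bigl(t\,\Psi(\lambda,0)\bigr)\leqslant 1$ for \emph{every} choice of $\wp$. Specializing $\wp=|\Phi_{(\lambda,0)}^t|\big/\int_\Omega|\Phi_{(\lambda,0)}^t|\,d\sigma$ turns this into the claimed bound. The free density $\wp$ and the use of Jensen's inequality are the missing ideas in your outline.
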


The proof goes along the lines of \cite[Lemma 4.1]{AIPS2}. % which in turn is based on the Interpolation Lemma of \cite{AIPS}. 
The key difference is that we will make use of the three-point Nevanlinna-Pick problem analysed in Section \ref{se:NP} instead of the Schwarz lemma used in \cite{AIPS2}.

\begin{proof} 
By considering the analytic family $\Phi_\lambda^{\, p_0}$ we may restrict our attention to the $p_0=1$ case. Observe that our assumption $\|\Phi_{(0,0)}\|_1 \leqslant 1$ implies $\sigma(\Omega) \leqslant 1.$ Actually, by the maximum principle for analytic $\mathscr L^1$-valued functions, we may further assume the strict inequality
\begin{equation}\label{eq:size}
\sigma(\Omega) <1.
\end{equation}
Otherwise $\Phi_{(\lambda,\eta)}$ would be constant in the parameter $(\lambda,\eta)$, as we have
$\|\Phi_{(\lambda,\eta)}\|_1\leqslant 
1$ for all $(\lambda,\eta) \in \DD^2$.
We may also assume in the proof that $0<c\leqslant |\Phi_{(\lambda,\eta)}(x)|\leqslant C<\infty $
uniformly for all $(x,(\lambda,\eta))\in \Omega\times \mathbb{D}^2$,  as the reduction of the 
general situation to this is done similarly to \cite[Section 2]{AIPS}. We choose an arbitrary positive probability density $\wp$, uniformly bounded away from $0$ and $\infty$,
\[ \| \wp\,\|_1 = \int_\Omega \;\wp(x)\,\textrm{d}\sigma(x)\;= 1.
\]
By Jensen's inequality using the convexity of $x\mapsto x\log (x)$ and \refeq{eq:size} we have $I:=\int_\Omega \wp(x)\log \wp(x)\, dx>0.$
For a fixed $\,\wp\,$, we consider the holomorphic function $\Psi$ in the bidisk $\mathbb{D}^2$
\[
\Psi(\lambda,\eta)=\frac{1}{I} \int_\Omega \wp \log \Phi_{(\lambda,\eta)}  \,\textrm{d}\sigma\;.
\]
Again by Jensen's inequality  we have the bound
\begin{equation*}
\re \Psi(\lambda,\eta)-1 =\frac{1}{I} \int_\Omega \wp \log \frac{|\Phi_{(\lambda,\eta)}|}{\wp}\,\textrm{d}\sigma \leqslant \, \frac{1}{I} \log \left(\int_\Omega  |\Phi_{(\lambda,\eta)}| \, \textrm{d} \sigma\right)  \;\leqslant 0.
\end{equation*}
Thus $\Psi$ maps the bidisk into a half-plane $\re \Psi(\lambda,\eta) \leqslant 1$, while $\Psi(0,0)=0$ by our assumption $\Phi_{(0,0)} \equiv 1$. Furthermore, by our symmetry assumption in \eqref{eq:Phi-sym} $\Psi(\lambda,\eta)=\overline{\Psi(\bar \eta,\bar \lambda)}$.
At this stage we appeal to the three-point Nevanlinna-Pick problem of Problem \ref{prob:threepoint} and deduce that for any $|\lambda| <1$,
\[1-\Psi(\lambda,0) \in \mathcal{W}_{|\lambda|}.
\]
From now on we assume that the exponent $t \in \C$ satisfies \eqref{eq:rangeoft} (with $p_0=1$). That is,
\[ |t|=\frac{1}{|\lambda|} \quad \mbox{and } \; \re t \geqslant 1.
\]
For such $t$, we have $1-1/t \in \D \mathcal{W}_{|\lambda|}$. Moreover, the tangent line to the the convex set $t \cdot (1-\mathcal{W}_{|\lambda|})$ at the point $1$ is vertical. In other words, we have $\re(t w) \leqslant 1$ for any $w \in 1-\mathcal{W}_{|\lambda|}$. These statements follow from the description of $\mathcal{W}_{|\lambda|}$ in Lemma \ref{lemma:NP}.
In particular, 
\begin{equation}\label{eq:goal}
 \re \left(t \Psi(\lambda,0) \right) = \frac{1}{\int \wp \log \wp} \re \left(t \int \wp \log \Phi_{(\lambda,0)} \;\textrm{d}\sigma \right) \leqslant 1.
\end{equation}
Equivalently, we have 
$$\int_\Omega \wp \log \frac{\left| \Phi_{(\lambda,0)}^t \right|}{\wp}\,\textrm{d}\sigma \leqslant 0.
$$
By specialising the choice of $\wp$, that is,  choosing  $\wp(x):=\left| \Phi_{(\lambda,0)}(x)^t  \right|\left(\int_\Omega \big|\Phi_{(\lambda,0)}^t \big|\right)^{-1}$, 
%one computes that this will indeed imply 
we obtain our assertion in the form
\[
 \log \left( \int_\Omega \left| \Phi_{(\lambda,0)}^t  \right| \;\textrm{d}\sigma \right) \leqslant 0.
\]

\end{proof}

\section{Proof of the main result}

Towards the proof of Theorem \ref{thm:main}, we first prove a more technical version with various normalisations. In particular, we assume that the Beltrami coefficient $\mu$ is supported in the set $A_R=\{ z : |z|>R \}$ for some $R>1$.
 \begin{theorem} \label{thm:integrability}
Given $\delta>0$ and $0 \leqslant k < 1$, let $\mu$ be measurable,  $|\mu(z)| \leqslant (1-\delta) k \chi_{A_{R}}(z)$, with some $R >1$. Let $f  \in W^{1,2}_{loc}(\C)$ be the homeomorphic solution to  $\overline \partial f(z) =  \mu(z) \partial f(z)$ normalised by fixing $0,1,\infty$. We have 
 \begin{equation}
 \label{eq:intmeans11}
  \int_{|z| = 1}  \left| \left( z \frac{ \,  f'(z) \, }{f(z)} \right)^t \right| |dz|  \; \leqslant \frac{C(\delta)}{R-1},
 \end{equation}
 for any $t \in \mathbb{C}$ with $|t|=\frac{2}{k}$ and $\re t \geqslant 2$.
 Here $C(\delta) < \infty$ is a constant depending only on $\delta$. The complex powers in \ref{eq:intmeans11} are defined in terms of the holomorphic branch of $\log(z f'/f )$ which vanishes at $z=0$.
 \end{theorem}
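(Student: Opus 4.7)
The plan is to apply the holomorphic amplification of Proposition~\ref{prop:interpolation} to a two-parameter family built from a one-parameter holomorphic deformation of $f$, and then translate the resulting area-type bound into the circle integral via subharmonicity. Since $f$ is conformal on $\{|z|<R\}$ and $\log(zf'/f)$ admits a holomorphic branch vanishing at $0$ there, $(zf'/f)^t$ is holomorphic on this region and $|(zf'/f)^t|$ is subharmonic. Circle means of subharmonic functions are non-decreasing, so integrating over $r\in[1,R]$ gives
\[
(R-1)\int_{|z|=1}|(zf'/f)^t|\,|dz|\;\leqslant\;\int_{1\leqslant|z|\leqslant R}|(zf'/f)^t|\,dA,
\]
and it will suffice to bound the right-hand area integral by a constant $C(\delta)$.

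For the family, I rescale $\hat\mu=\mu/k$ (so $|\hat\mu|\leqslant(1-\delta)\chi_{A_R}$) and, for $\lambda\in\DD$, let $F_\lambda$ be the principal solution of $\overline\partial F_\lambda=\lambda\hat\mu\,\partial F_\lambda$ fixing $0,1,\infty$. Then $F_0=\mathrm{id}$, $F_k=f$, and each $F_\lambda$ is univalent on $\{|z|<R\}$. I set
\[
\Phi_{(\lambda,\eta)}(z)\;=\;\frac{zF_\lambda'(z)}{F_\lambda(z)}\cdot\overline{\frac{zF_{\bar\eta}'(z)}{F_{\bar\eta}(z)}}.
\]
The second factor, being the complex conjugate of an antiholomorphic function of $\eta$, is holomorphic in $\eta$, so $\Phi$ is jointly holomorphic on $\DD^2$. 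It is non-vanishing on $\{|z|\leqslant R\}\setminus\{0\}$ by univalence of $F_\lambda$; it satisfies $\Phi_{(0,0)}\equiv 1$ since $F_0=\mathrm{id}$; and by construction it enjoys the required symmetry $\Phi_{(\lambda,\eta)}=\overline{\Phi_{(\bar\eta,\bar\lambda)}}$. At $(\lambda,\eta)=(k,0)$ the second factor collapses to $1$, so $\Phi_{(k,0)}(z)=zf'(z)/f(z)$ with precisely the branch of the logarithm used in the statement of the theorem.

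The heart of the proof is the a priori $L^{p_0}$ bound with $p_0=2$, matching the target range $|t|=2/k$, $\re t\geqslant 2$ of Proposition~\ref{prop:interpolation} applied at $|\lambda|=k$. Writing $A(\lambda)(z)=zF_\lambda'(z)/F_\lambda(z)$, so that $\Phi_{(\lambda,\eta)}=A(\lambda)\,\overline{A(\bar\eta)}$, the pointwise bound of Proposition~\ref{prop:propk} applied to the rescaled map $G_\lambda(w)=F_\lambda(Rw)/(RF_\lambda'(0))\in\mathcal S_{|\lambda|(1-\delta)}$ gives
\[
|A(\lambda)(z)|\;\leqslant\;\left(\frac{R+|z|}{R-|z|}\right)^{|\lambda|(1-\delta)}.
\]
I take $\sigma$ on $\Omega=\{1\leqslant|z|\leqslant R\}$ to be Lebesgue area weighted by $(R-|z|)^\alpha$, with $\alpha$ chosen in terms of $\delta$ so that $\|A(\lambda)\|_{L^4(\sigma)}\leqslant 1$ uniformly in $\lambda\in\DD$; Cauchy--Schwarz then yields $\|\Phi_{(\lambda,\eta)}\|_{L^2(\sigma)}\leqslant 1$. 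Proposition~\ref{prop:interpolation} produces $\int_\Omega|(zf'/f)^t|\,d\sigma\leqslant 1$, and undoing the weighted normalization together with the subharmonicity reduction of the first paragraph delivers the claimed $C(\delta)/(R-1)$.

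The main obstacle is the uniform $L^2$ bound on $\Phi$. Cauchy--Schwarz reduces it to an $L^4$ estimate on $A(\lambda)$, and the weight $(R-|z|)^\alpha$ must simultaneously beat the Koebe-type blow-up of the pointwise bound as $|z|\to R$ and leave enough mass near $|z|=1$ to keep the final $(R-1)^{-1}$ sharp; balancing $\alpha$ against $\delta$ is the delicate part. For small $\delta$ the Cauchy--Schwarz splitting becomes lossy and one should exploit the joint $(\lambda,\eta)$-structure of $\Phi$ more directly --- this is exactly the freedom to ``change the conformal structure in the second variable'' that the authors flag in the introduction as the source of the improvement over the one-variable argument of \cite{AIPS2}.
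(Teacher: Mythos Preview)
Your product family $\Phi_{(\lambda,\eta)}=A(\lambda)\,\overline{A(\bar\eta)}$ does formally satisfy the symmetry \eqref{eq:Phi-sym}, but it cannot deliver the uniform $L^2$ bound that Proposition~\ref{prop:interpolation} needs, and this is a genuine gap rather than a technicality. The only input you feed into the $L^2$ estimate is the pointwise bound of Proposition~\ref{prop:propk}, which gives $|A(\lambda)(z)|\leqslant\bigl(\tfrac{R+|z|}{R-|z|}\bigr)^{1-\delta}$. After Cauchy--Schwarz you need $|A|^4$ integrable, forcing a weight $(R-|z|)^\alpha$ with $\alpha>3-4\delta$; undoing that weight and the subharmonicity step then yields at best $\int_{|z|=1}|(zf'/f)^t|\,|dz|\leqslant C(\delta)(R-1)^{-1-\alpha}$ with $\alpha$ close to $3$. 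In the application to Theorem~\ref{thm:main} one takes $R=1/r$, so this would give $\beta_f(t)\leqslant 4$, worse even than the trivial bound $k|t|=2$. There is no way to ``exploit the joint $(\lambda,\eta)$-structure more directly'' here, because your $\Phi$ has none: $\log\Phi_{(\lambda,\eta)}=\log A(\lambda)+\overline{\log A(\bar\eta)}$ splits additively, so the function $\Psi$ in the proof of Proposition~\ref{prop:interpolation} is just $\psi(\lambda)+\overline{\psi(\bar\eta)}$ for a one-variable $\psi$, and the bidisk problem collapses to a single Schwarz lemma on $\psi$.

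What the paper does instead is to build a \emph{genuine} two-parameter quasiconformal family: set
\[
\mu_{\lambda,\eta}(z)=\tfrac{\lambda}{k}\,\mu(z)\ \text{for }|z|>1,\qquad \mu_{\lambda,\eta}(z)=\tfrac{\eta}{k}\,\overline{\mu(1/\bar z)}\ \text{for }|z|<1,
\]
and let $f_{\lambda,\eta}$ be the normalised solution. The reflection symmetry of $\mu_{\lambda,\eta}$ forces $f_{\lambda,\eta}(z)=1/\overline{f_{\bar\eta,\bar\lambda}(1/\bar z)}$, which yields \eqref{eq:Phi-sym} for $\Phi_{(\lambda,\eta)}(z)=z\,f_{\lambda,\eta}'(z)/f_{\lambda,\eta}(z)$ directly on $\Omega=\mathbb{S}^1$. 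Crucially, each $f_{\lambda,\eta}$ is a global $(1-\delta)$-quasiconformal map, so the $L^2$ bound $(R-1)\int_{|z|=1}|f_{\lambda,\eta}'/f_{\lambda,\eta}|^2\,|dz|\leqslant C(\delta)$ follows from quasiconformal area distortion exactly as in \cite[(4.18)]{aipp}. One then applies Proposition~\ref{prop:interpolation} with $p_0=2$ and $d\sigma=c(\delta)(R-1)\,|dz|$ on $\mathbb{S}^1$; no annulus, no weight, and the factor $(R-1)^{-1}$ comes straight from the normalisation of $\sigma$. The phrase ``change the conformal structure in the second variable'' refers precisely to this reflected Beltrami data inside the disk---that is the idea your construction is missing.
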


\begin{proof} %[Proof of Theorem \ref{thm:integrability}]
  We embed $f$ in a two-parameter holomorphic motion by setting for $(\lambda,\eta) \in \mathbb{D}^2$,
\begin{equation*}
\mu_{\lambda,\eta}(z)= \left\{ 
\begin{array}{r}
 \frac{\lambda}{k}\, \mu(z) \text{ for $|z|>1$},\\ 
\frac{\eta}{k}\, \overline{\mu (1/\bar z)} \text{ for $|z|<1$}.\\
\end{array} \right.\end{equation*}
Let $f_{\lambda,\eta}$ denote the unique solution to the Beltrami equation $f_{\overline z} = \mu_{\lambda,\eta} f_z$ normalised so that $f(0) = 0$, $f(1) = 1$ and $f(\infty) = \infty$. The uniqueness of the solution implies that $f_{k,0}=f$.  
 
We apply Proposition \ref{prop:interpolation} to the family
\begin{equation}
\label{perhe}
 \Phi_{(\lambda,\eta)}(z): = z \frac{ \, (f_{\lambda,\eta})'(z) \, }{f_{\lambda,\eta}(z)}, \qquad (\lambda,\eta) \in \DD^2, \, z \in \mathbb S^1.
\end{equation}
%By \cite[Theorem 5.7.2]{AIMb},  
Since $f_{\lambda,\eta}$ is conformal on $\mathbb S^1$, this is a non-vanishing holomorphic family. 
We record the following symmetry of the construction,
\[ f_{\lambda,\eta}(z)=\frac{1}{\overline{f_{\bar \eta,\bar \lambda}(1/ \bar z)}}, \qquad z \in \mathbb{C}.
\]
This, in turn implies
\begin{equation}
\label{eq:symmetry}
\Phi_{(\lambda,\eta)}(z) = \overline{\Phi_{(\bar \eta, \bar \lambda)}(z)}, \qquad z \in \mathbb S^1.
\end{equation}

Standard quasiconformal estimates give the following global $L^2$-bounds, 
\begin{equation} 
(R-1) \,  \int_{|z|=1}  \left|  \frac{ \,  f_{\lambda,\eta}'(z) \, }{f_{\lambda,\eta}(z)} \right|^2 |dz| \; \leqslant C(\delta) < \infty.
\end{equation}
This is derived exactly as in \cite[(4.18)]{aipp}. The point is that $f_{\lambda,\eta}$ are $(1-\delta)$-quasiconformal for all $(\lambda,\eta) \in \DD^2$.

Observe that $\Phi_{(0,0)} \equiv 1$ since $f_{0,0}(z)=z$. 
We now use Proposition \ref{prop:interpolation} with $p_0=2$, $\Omega = \mathbb{S}^1$ and  $d\sigma(z) = c(\delta) (R-1) |dz|$ to deduce the conclusion of the theorem. A final remark is in order on the choices of complex logarithms. In Proposition \ref{prop:interpolation} we used the holomorphic flow to define complex logarithm. This is consistent with the statement of Theorem \ref{thm:integrability} as can be seen by considering the continuous function 
\[ (z,\lambda) \mapsto \log  \left( z \frac{ \, (f_{\lambda,0})'(z) \, }{f_{\lambda,0}(z)} \right)
\]
on the simply connected region $\{z \colon |z|<R \} \times \{ \lambda \colon |\lambda| <1\}$.
\end{proof}

In order to compare the growth of complex powers of $f'$ with that of $zf'/f$ we need to control the growth of $\log (f(z)/z)$. The growth is quite slow, for a bounded function $f \in \mathcal S$, we have \cite[Section 4]{baranov-hedenmalm}
\[\left|  \log \frac{f(z)}{z} \right|= O \left( \sqrt{\log \frac{1}{1-|z|^2}} \right), \quad |z| \to 1.
\] 
This would suffice for our purposes but it turns out that in our case, when $f$ has a quasiconformal extension, $\log (f(z)/z)$ is even bounded.
  
\begin{lemma}
\label{lemma:foverz}
Let $f \colon \DD \to \Omega$ be a conformal map in the class $\mathcal S_k$. Then $\log(\frac{f(z)}{z})$ is a bounded function. The bound only depends on the constant $k$.
\end{lemma}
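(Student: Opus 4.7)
The plan is to reduce the lemma to Proposition \ref{prop:propk} via the elementary identity
\[ z\frac{d}{dz}\log\frac{f(z)}{z} = \frac{zf'(z)}{f(z)}-1.\]
First I would observe that $g(z):=\log(f(z)/z)$ is well defined and holomorphic on $\DD$ with $g(0)=0$, since $f(z)/z$ is non-vanishing on $\DD$ and equals $f'(0)=1$ at the origin. Setting $h(z)=\log(zf'(z)/f(z))$ with the branch $h(0)=0$ supplied by Proposition \ref{prop:propk}, the identity above becomes $zg'(z) = e^{h(z)}-1$; combined with the pointwise bound $|h(z)|\leqslant k\log\frac{1+|z|}{1-|z|}$ and the elementary inequality $|e^w-1|\leqslant |w|e^{|w|}$, this yields
\[ |zg'(z)|\leqslant k\log\frac{1+|z|}{1-|z|}\left(\frac{1+|z|}{1-|z|}\right)^k.\]

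Next, I would recover $g$ by integrating $g'$ along the radial segment from $0$ to $z$. Parametrising by $w=tz$, $t\in[0,1]$, and writing $g(z)=\int_0^1 (tz)g'(tz)\,\frac{dt}{t}$, the substitution $s=t|z|$ reduces the task to verifying that the one-variable integral
\[ C(k):=\int_0^1 k\log\frac{1+s}{1-s}\left(\frac{1+s}{1-s}\right)^k \frac{ds}{s}\]
is finite; once this is known, one obtains $|g(z)|\leqslant C(k)$ uniformly in $z\in\DD$.

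The crux of the argument, and the only place where the quasiconformal hypothesis enters, is the convergence of this integral. Near $s=0$ the factor $\log\frac{1+s}{1-s}=O(s)$ cancels the $1/s$ singularity, so the integrand is bounded there. Near $s=1$ the integrand behaves like $\log\frac{1}{1-s}\cdot(1-s)^{-k}$, which is integrable precisely because $k<1$. No other step presents a genuine difficulty; the only minor bookkeeping concern is to verify that the branches of $\log(zf'/f)$ and $\log(f/z)$ used above are the natural ones—both are continuous determinations on the simply connected domain $\DD$ vanishing at the origin, so they are automatically compatible with the identity $zg'=e^{h}-1$.
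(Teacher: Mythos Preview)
Your argument is correct. The identity $zg'(z)=e^{h(z)}-1$ is right, the bound $|e^w-1|\leqslant |w|e^{|w|}$ together with Proposition~\ref{prop:propk} gives the stated pointwise estimate, and the radial integral converges exactly as you say: the logarithmic factor kills the $1/s$ pole at $0$, and near $s=1$ the integrand is $O\bigl((1-s)^{-k}\log\frac{1}{1-s}\bigr)$, integrable since $k<1$.

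The paper takes a genuinely different route. Rather than integrating in the $z$-variable, it freezes $z$ and varies the Beltrami coefficient: embedding $f$ in a holomorphic motion $f_\lambda$ (with $f_k=f$), it considers $\lambda\mapsto g(\lambda)=\log(f_\lambda(z)/z)$. Quasiconformal distortion bounds give $|\re g(\lambda)|\leqslant C(|\lambda|)$ uniformly in $z$, so $g$ maps a disk $\DD_\rho$ ($\rho=\sqrt{k}$) into a vertical strip; the Schwarz--Pick lemma then controls $|g(k)|$ by the hyperbolic distance $d_{\DD_\rho}(0,k)$. Your approach is more elementary once Proposition~\ref{prop:propk} is in hand, and it yields an explicit integral expression for $C(k)$. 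The paper's approach, on the other hand, isolates a reusable principle---a two-sided bound on the real part of a holomorphic function of $\lambda$ automatically bounds the imaginary part---which it invokes again verbatim in the proof of Theorem~\ref{thm:twisting}. Both arguments ultimately rely on $k<1$: yours for integrability near $s=1$, the paper's to place $\lambda=k$ strictly inside $\DD_\rho$.
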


\begin{proof}
Embed $f$ in the holomorphic motion by solving the Beltrami equation for $\lambda \in \DD$
\[ \bar \D f_\lambda = \frac{\lambda}{k} \frac{\bar \D f}{\D f} \, \D f_\lambda, \quad f_\lambda(0)=0,\  f_\lambda'(0)=1. 
\]
For a fixed $z \in \DD$, consider the holomorphic function $g(\lambda)= \log \frac{f_\lambda(z)}{z}$.
From distortion properties of quasiconformal maps the real part is bounded from above and below (independently of $z$),
\[ -C(|\lambda|) \leqslant \re g(\lambda)=\log \left| \frac{f_\lambda(z)}{z} \right| \leqslant C(|\lambda|).
\]
It follows automatically that the imaginary part also satisfies a similar bound. Indeed, let $\rho=\sqrt{k}$ and $\DD_\rho=\{z \colon |z| <\rho\}$. The function $g$ maps $\DD_\rho$ into a strip $S_\rho=\{z \colon -C(\rho) <\re z < C(\rho) \}$ with $g(0)=0$. In view of the Schwarz-Pick lemma, $g$ contracts in the hyperbolic metric $d_{S_\rho}(0,g(k)) \leqslant d_{\DD_\rho}(0,k)=d_\DD(0,\sqrt{k})$. We deduce that
\[ \left| \log \frac{f(z)}{z} \right| =|g(k)| \leqslant C(k),
\]
with a different constant.

\end{proof}

We are ready to deduce Theorem \ref{thm:main}.
\begin{proof}[Proof of Theorem \ref{thm:main}]
Consider a conformal map $f \colon \DD \to \Omega$ with a $k$-quasi\-conformal extension. By composing with a similarity transformation, we may assume that $f \in \mathcal{S}_k$. Let $t \in \C$ with $|t|=\frac{2}{k}$ and $\re t \geqslant 2$. By Lemma \ref{lemma:foverz}
\begin{align*} 
\int_{|z|=r} |f'(z)^t| |dz| & \leqslant C(k,t) \int_{|z|=r}  \left| \left( z \frac{ \,  f'(z) \, }{f(z)} \right)^t \right| |dz|
\end{align*}
We continue the inequality by applying Theorem \ref{thm:integrability} for the function $f(rz)/f(r)$. For a small $\delta>0$ we have
\begin{align*}
& \leqslant C(k,t) (1-r)^{-\delta |t|} \int_{|z|=r} \left| \left( z \frac{ \,  f'(z) \, }{f(z)} \right)^{(1-\delta)t}  \right| |dz|  \\
& \leqslant C(k,t,\delta) (1-r)^{-1-\delta |t|}.
\end{align*}
This shows that $\beta_f(t) \leqslant 1+\delta |t|$, and letting $\delta \to 0$ proves the Theorem.
\end{proof}

Integration in polar coordinates (and the fact that $B_k(t)$ is convex in every direction) gives the following corollary.

\begin{corollary}
\label{cor:integrability}
Let $f \colon \DD \to \C$ be a conformal map with $k$-quasiconformal extension. Then we have
\[ \int_{\mathbb D} \left| (f\, ')^t \right| < \infty \qquad \mbox{for} \quad |t| <\frac{2}{k},\ \re t \geqslant k |t|. 
\]
\end{corollary}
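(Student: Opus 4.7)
\medskip

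\noindent\textbf{Proof plan.} The plan is to combine Theorem \ref{thm:main} with the convexity of $B_k$ along radial directions in $\C$ to upgrade the boundary bound $B_k(t)=1$ on the circle $|t|=2/k$ (for $\re t\ge k|t|$) into the strict bound $B_k(t)\le k|t|/2 <1$ in the interior of the corresponding cone-disk, after which convergence of the area integral will follow from polar integration.

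First I would verify the convexity statement: for any fixed unit vector $\omega\in\C$, the pointwise modulus $|f'(z)^{s\omega}|=|f'(z)|^{s\re\omega}\exp(-s\,\im\omega\cdot\arg f'(z))$ is log-linear in $s$, so H\"older's inequality applied to integration over $\{|z|=r\}$ shows that $s\mapsto \beta_f(s\omega)$ is convex. Taking the supremum over $f\in\mathcal S_k$ propagates convexity to $s\mapsto B_k(s\omega)$. Now fix $t\in\C$ with $|t|<2/k$ and $\re t\ge k|t|$, and set $\omega=t/|t|$, so that $\re\omega\ge k$. Then the point $t_0=(2/k)\omega$ lies on the circle $|t_0|=2/k$ with $\re t_0=(2/k)\re\omega\ge 2$, so Theorem \ref{thm:main} yields $B_k(t_0)=1$. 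Combined with the trivial value $B_k(0)=0$ and convexity of $s\mapsto B_k(s\omega)$ on $[0,2/k]$, this gives
\[
 B_k(t) \;=\; B_k(|t|\,\omega) \;\le\; \frac{|t|}{2/k}\,B_k(t_0) \;=\; \frac{k|t|}{2} \;<\;1.
\]

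With this strict bound in hand, the rest is a standard polar-coordinate calculation. Choose $\varepsilon>0$ so small that $\beta:=k|t|/2+\varepsilon<1$. By definition of $\beta_f(t)\le B_k(t)$, there exist $r_0\in(0,1)$ and $C<\infty$ with $\int_{|z|=r}|f'(z)^{t}|\,|dz|\le C(1-r)^{-\beta}$ for all $r\in[r_0,1)$; for $r\in[0,r_0]$ the same integral is bounded since $f'$ is continuous on a neighbourhood of $\overline{B(0,r_0)}$. Writing
\[
 \int_{\DD}|f'(z)^{t}|\,dA(z) \;=\; \int_0^1 r\!\int_{|z|=r}|f'(z)^{t}|\,|dz|\,dr
\]
and splitting the $r$-integral at $r_0$, the tail is controlled by $C\int_{r_0}^{1}(1-r)^{-\beta}\,dr$, which is finite because $\beta<1$.

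The only genuine content is the convexity-plus-boundary-value argument of the first paragraph, and the main (minor) point to be careful about is that the line used for convexity passes through the origin and hits the critical circle inside the sector $\re t\ge k|t|$ where Theorem \ref{thm:main} is available; this is exactly what the hypothesis $\re t\ge k|t|$ guarantees. No further obstacle appears.
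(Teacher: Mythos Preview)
Your proof is correct and follows exactly the approach indicated in the paper, which simply says ``Integration in polar coordinates (and the fact that $B_k(t)$ is convex in every direction) gives the following corollary.'' You have merely filled in the details of that one-line hint: the H\"older argument for radial convexity of $s\mapsto B_k(s\omega)$, the interpolation between $B_k(0)=0$ and $B_k(t_0)=1$ along the ray to get $B_k(t)\le k|t|/2<1$, and the polar integration to pass from $\beta_f(t)<1$ to finiteness of the area integral.
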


\begin{remark}
For any $t \in \C$ with $|t|=\frac{2}{k}$ there exist a conformal map $f$ with $k$-quasiconformal extension such that 
\[ \int_{\mathbb D} \left| (f\, ')^t \right| = \infty.
\]
Namely, it is sufficient to consider Example \ref{ex:basicexamples} with $\sigma=1-\frac{2}{t}$.
\end{remark}

\section{Example: welding of radial stretchings}
\label{se:example}

It will be instructive to analyse Examples \ref{ex:basicexamples} from the point of  view of holomorphic motions. We will embed them in a motion parametrised by the bidisk as in the proof of Theorem \ref{thm:integrability}. Again, it will be more convenient to work with upper and lower half planes $\C_+$ and $\C_-$ instead of the unit disk and its exterior. Set the Beltrami coefficient $\mu(z)=z/\bar z$, $z \in \C_+$.
We now describe the holomorphic motion $f_{\lambda,\eta}$ solving the normalised Beltrami equation with coefficient 

\begin{equation*}
\mu_{\lambda,\eta}(z)= \left\{ 
\begin{array}{r}
 \lambda \mu(z) \text{ for $z \in \mathbb{C_+}$},\\
\eta \overline{\mu (\bar z)} \text{ for $z \in \mathbb{C_-}$},\\
\end{array} \right.\end{equation*}
for $(\lambda,\eta) \in \DD^2$.
Along the diagonal $(\lambda,\lambda) \in \DD^2$ the solution is the radial stretching/twisting map
\[ f_{\lambda,\lambda}(z)=\frac{z}{|z|} \, |z|^{\frac{1+\lambda}{1-\lambda}}.
\]
In general, the solution $f_{\lambda,\eta}$ will be given in $\C_+$ by $f_{\lambda,\lambda}$ followed by a (conformal) complex power map and in $\C_-$ by $f_{\eta,\eta}$ followed by another complex power map.
We have to choose the complex powers in such a way that the map in $\C_+$ and in $\C_-$ matches on the real line. Explicitly, with the notation $\sigma_+=\frac{1+\lambda}{1-\lambda}$, $\sigma_-=\frac{1+\eta}{1-\eta}$ and $1/\sigma=\frac{1}{2}(1/\sigma_+ + 1/\sigma_-)$,
\[ f_{\lambda,\eta}(z)= \left( \frac{z}{|z|} |z|^{\sigma_\pm} \right)^{\sigma/\sigma_{\pm}} \text{ for $z \in \mathbb{C_\pm}$}.
\]
%The explicit quasiconformal extension of $g_{\sigma_0}$ from Example \ref{ex:basicexamples} is given by $f_{0,\sigma_0-1}$.
The stretching/twisting behaviour of the motion $f_{\lambda,\eta}$ (at the origin) is described by the complex exponent $\sigma(\lambda,\eta)$ where
\[ \frac{1}{\sigma(\lambda,\eta)}= \frac{1}{2} \left( \frac{1-\lambda}{1+\lambda} + \frac{1-\eta}{1+\eta} \right).
\]

\begin{remark}
Observe that $\sigma \colon \DD^2 \to U$ satisfies the conditions of Problem \ref{prob:threepoint} and $\sigma$ is equal to $\Psi$ from \eqref{eq:firstdisk} with the choice of $c=1$ (general values of $c$ may be covered by reparametrising the motion).
That is, we see that the first disk in Lemma \ref{lemma:NP} is a `physical example', in the sense that
it appears from holomorphic functions built from holomorphic motion of conformal maps (through the constructions in the proofs of Theorem \ref{thm:integrability} and Proposition \ref{prop:interpolation}).
Conjecture \ref{conj:circular-brennan} amounts to saying that this first disk is the only allowable region for such physical examples.
Our method does not capture subtler properties of conformal maps and, in particular, is unable to rule out the `non-physical example' of the second disk in Lemma \ref{lemma:NP}. 
\end{remark}

%\[ \frac{1}{\sigma}=\frac{1}{2} \left( \frac{1}{\sigma_+} +\frac{1}{\sigma_-} \right).
%\]

\section{Twisting estimate for quasidisks}

As an application of our main result, we give a geometric multifractal dimension estimate on the size of twisting for a quasidisk. Let $\Omega \subset \C$ be a bounded $L$-quasidisk and consider points $x \in \D \Omega$ which twist at a prescribed rate $\gamma \in \R$.
The fact that $\Omega$ is the image of the unit disk under a global $L$-quasiconformal map is equivalent to the fact that (any) conformal map $f \colon \DD \to \Omega$ has a $k$-quasiconformal extension. The precise relation of the constants involved are \cite{kuhnau,smirnov}
\[ k=\frac{L^2-1}{L^2+1}.
\]
We now recall the notion of rate of twisting  from Section 2 in terms of the conformal map $f \colon \DD \to \Omega$. Let $x=f(\zeta)$, $\zeta \in \D \DD$ and consider the limit
\begin{equation}
\label{eq:gammaspiralling}
\lim_{\tau \to 1} \frac{\arg(f(\tau \zeta)-f(\zeta))}{\log |f(\tau \zeta)-f(\zeta)|}.
\end{equation}
If this limit exists and is equal to $\gamma \in \R$, then we say that $x \in \D \Omega$ is $\gamma$-spiralling (or twisting). 

\begin{theorem} 
\label{thm:twisting}
Let $f \colon \DD \to \Omega$ be a conformal map (onto) with $k$-quasiconformal extension. The set $F \subset \D \Omega$ of $\gamma$-spiralling points has Hausdorff dimension at most
\begin{equation}
\Hdim F \leqslant 2-\frac{2\sqrt{1-k^2}}{k} |\gamma|.
\end{equation}
\end{theorem}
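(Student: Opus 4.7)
I would deduce the theorem from the radial integrability of complex powers of $f'$ supplied by Corollary \ref{cor:integrability}, via a dyadic Whitney covering argument in which the complex exponent $t$ is chosen to detect $\gamma$-spiralling boundary points with maximal efficiency.

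The first step is to translate the spiralling condition \eqref{eq:gammaspiralling} into a radial description of $f'$. By the Koebe distortion estimates, for $\zeta \in \D \DD$ and $r \to 1^-$ we have $|f(r\zeta) - f(\zeta)| \asymp (1-r)|f'(r\zeta)|$ and $\arg (f(r\zeta) - f(\zeta)) = \arg f'(r\zeta) + O(1)$. Combined with the Beurling inequality \eqref{eq:beurling}, which forces the scaling exponent $\alpha$ at a $\gamma$-spiralling point to satisfy $\alpha \ge (1+\gamma^2)/2 > 1$, the definition \eqref{eq:gammaspiralling} becomes
\[ \log|f'(r\zeta)| = (\alpha - 1 + o(1))\log(1-r), \qquad \arg f'(r\zeta) = (\gamma \alpha + o(1))\log(1-r). \]
Hence for any exponent $t = a + ib \in \C$,
\[ |f'(r\zeta)^t| = (1-r)^{a(\alpha - 1) - b\gamma \alpha + o(1)}. \]

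Next I would decompose $\DD$ into dyadic Whitney squares $Q$ of sidelength $\ell(Q)\asymp 2^{-n}$; by conformal distortion $f(Q)$ is comparable to a disk of diameter $\Delta_Q \asymp 2^{-n}|f'(z_Q)|$, and as $\zeta$ ranges over $E := f^{-1}(F)$ and $n \ge N$, the images $f(Q_n(\zeta))$ of the Whitney squares containing $(1-2^{-n})\zeta$ cover $F$. Taking $t$ inside the admissible region $\re t \ge k|t|$, $|t| \le 2/k$ of Corollary \ref{cor:integrability}, so that $\int_\DD |(f')^t|\,dA < \infty$, the radial formula above gives, on these specific $Q_n(\zeta)$,
\[ \Delta_Q^s \asymp \ell(Q)^2 |f'(z_Q)^t| \cdot 2^{o(n)}, \qquad s = \frac{2-a}{\alpha} + a - b\gamma. \]
The boundary choice $a = 2$ of the constraint $\re t \ge k|t|$ collapses this to $s = 2 - b\gamma$, \emph{independent of $\alpha$}, while $|t| \le 2/k$ with $a = 2$ allows $|b|$ up to $(2/k)\sqrt{1-k^2}$. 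Fix $\epsilon > 0$ and take $b = \operatorname{sign}(\gamma)(2/k)\sqrt{1-k^2}(1-\epsilon)$; then $|t|<2/k$ and $\re t \ge k|t|$, so $\int_\DD |(f')^t|\,dA < \infty$. Frostman's lemma applied to the resulting Whitney sum yields $\Hdim F \le 2 - (1-\epsilon)\tfrac{2\sqrt{1-k^2}}{k}|\gamma|$, and letting $\epsilon \to 0$ gives the theorem.

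The main technical obstacle I foresee is that the spiralling and scaling limits used above are asymptotic and need not hold uniformly in $\zeta \in E$ nor along every radius. A standard Egorov/Borel--Cantelli reduction is required: one partitions $F$ by the rate of convergence in \eqref{eq:gammaspiralling} and by the approximate value of the scaling exponent into countably many subsets, on each of which the $o(\log(1-r))$ error terms are effective at all sufficiently large dyadic scales, so that the Whitney summation can be dominated uniformly by $\int_\DD |(f')^t|\,dA$. The pleasant feature that $a = 2$ removes $\alpha$ from $s = 2 - b\gamma$ is precisely what makes this partition argument close up cleanly, and it explains why the restriction $\re t \ge 2$ in Theorem \ref{thm:main} is exactly the input needed for this multifractal dimension estimate.
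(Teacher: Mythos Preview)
Your proposal is correct and follows essentially the same route as the paper: both feed Corollary~\ref{cor:integrability} with the exponent $t=2+is$ (the choice $\re t=2$ being exactly what kills the dependence on the scaling exponent $\alpha$) into a covering argument that converts $\int_{\DD}|(f')^{t}|<\infty$ into a Hausdorff content bound for $F$. The paper's implementation is slightly more economical: instead of dyadic Whitney squares along every radius plus an Egorov/Borel--Cantelli reduction, it runs a Vitali covering on $\partial\DD$ that selects \emph{one} scale $r_\zeta$ per point (where the spiralling ratio is already $\varepsilon$-close to $\gamma$), attaches a single top-half Carleson box $Q_i$ to each selected arc, and sums $\int_{Q_i}|(f')^{2+is}|\gtrsim \tilde r_i^{\,2-(\gamma-\varepsilon)s}$ directly---so the scaling exponent $\alpha$ never enters and no uniformity reduction is needed. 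One small point to tighten in your write-up: the Whitney squares $Q_n(\zeta)$ lie in $\DD$, so their images do not literally cover $F\subset\partial\Omega$; you should pass to the boundary shadows (as the paper does with $f(5I_i)$), and the estimate $\arg(f(r\zeta)-f(\zeta))=\arg f'(r\zeta)+O(1)$ is not pure Koebe but uses the quasiconformal extension via a Schwarz--Pick argument as in Lemma~\ref{lemma:foverz}.
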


The reader may contrast this with \cite[Corollary 5.4]{AIPS2} which gives a similar multifractal estimate for twisting points. That bound applies to arbitrary quasiconformal maps, and therefore gives a weaker result when applied to the map $f$. By Proposition \ref{prop:propk}, the maximal pointwise twisting for a quasdisk is $|\gamma| = k/\sqrt{1-k^2}$. Theorem \ref{thm:twisting} is effective in the sense that it shows this extremal twisting can only happen on a set of dimension zero.

\begin{proof}[Proof of Theorem \ref{thm:twisting}]
Let $E=f^{-1}(F)$ be the pre-image of $\gamma$-spiralling points and fix an $\varepsilon >0$. By definition of \eqref{eq:gammaspiralling} we may select for any $\zeta \in F$ a radius $r_\zeta \in (0,\varepsilon)$ such that 
\begin{equation}
\label{eq:gammaspiralling}
 \frac{\arg(f(\tau \zeta)-f(\zeta))}{\log |f(\tau \zeta)-f(\zeta)|} \in (\gamma-\varepsilon,\gamma+\varepsilon),
\end{equation}
for $\tau \geqslant 1-r_\zeta$.
Let us denote the arcs of $\D \DD$ with center $\zeta$ and length $r_\zeta$ by $I_\zeta$.
Vitali's covering lemma allows us to select a countable disjoint subcollection $\{I_{\zeta_i} \}$ such that 
$E \subset \cup \, 5 I_{\zeta_i}$.
For each $\zeta_i$, we associate a ``top-half'' of  a Carleson box $Q_i$ as follows.
Let $Q_i=\{z : z/|z| \in I_i \mbox{ and } 1-r_i \leqslant |z| \leqslant 1-r_i/2\}$, where $r_i=r_{\zeta_i}$ and $I_i=I_{\zeta_i}$ for short.
Now we show that $\arg f'$ has to be big (in absolute value) on each $Q_i$. 
From Koebe distortion and quasisymmetric properties of $f$ we have that, for $z \in Q_i$
\[ \left| \re \left( \log f'(z) -  \log \frac{f((1-r_i) \zeta_i)-f(\zeta_i)}{r_i} \right) \right| \leqslant C(k).
\] 
From this, by an argument exactly as in Lemma \ref{lemma:foverz}, we deduce that the imaginary part
also satisfies a similar bound
\begin{equation*}
\label{eq:argfprime}
 \left| \arg f'(z) - \arg (f((1-r_i) \zeta_i)-f(\zeta_i)) \right| \leqslant C(k), \quad z \in Q_i.
\end{equation*}
Combined with \eqref{eq:gammaspiralling} we conclude that 
\[ \arg f'(z) \leqslant (\gamma-\varepsilon) \log \tilde r_i+C(k), \quad z \in Q_i,
\]
with the notation $\tilde r_i=|f((1-r_i)\zeta_i)-f(\zeta_i)|$. We also assume here $\gamma>0$ for simplicity (the case $\gamma<0$ is similar, we just have to consider $s<0$ below).
For $s>0$,
\[ \int_{Q_i} |(f')^{2+is}| = \int_{Q_i} |f'|^2 \exp(-s \arg f') \geqslant C(k) \tilde r_i^{2-(\gamma-\varepsilon)s}.
\]
Summing it up over $Q_i$, we obtain
\[ \sum_i (\tilde r_i)^{2-(\gamma-\varepsilon) s} \leqslant C(k)  \int_{\cup Q_i} |(f')^{2+is}| \leqslant C(k) \int_\DD |(f')^{2+is}| <\infty,
\]
as long as $|s|<2\frac{\sqrt{1-k^2}}{k}$, see Corollary \ref{cor:integrability}.
The sets $\{ f(5I_i) \}$ provide a cover of $F$, each with diameter comparable to $\tilde r_i$. This shows that $\Hdim F \leqslant 2-(\gamma-\varepsilon)s$. Finally, taking the limits $s \to 2\frac{\sqrt{1-k^2}}{k}$ and $\varepsilon \to 0$ proves the theorem.

\end{proof}

\bibliographystyle{amsplain}

\end{document}